\newtheorem{remark}{Remark}
\newtheorem{lemma}[remark]{Lemma}
\newtheorem{theorem}[remark]{Theorem}
\title{Corrections to the article ``The
metric dimension of graph with pendant edges" [Journal of Combinatorial Mathematics and Combinatorial Computing,
65 (2008) 139--145]}
\author{D. Kuziak$^{1}$,  J. A.
Rodr\'{\i}guez-Vel\'{a}zquez$^{2}$ and I. G.
Yero$^{2}$\\
    \\
$^1${\small Faculty of Applied Physics and  Mathematics}\\
{\small Gda\'nsk University of Technology,} {\small
ul. Narutowicza 11/12 80-233 Gda\'nsk, Poland} \\ {\small
dkuziak\@@mif.pg.gda.pl}
\\
\\
$^2${\small Departament d'Enginyeria Inform\`atica i Matem\`atiques,}\\
{\small Universitat Rovira i Virgili,}  {\small Av. Pa\"{\i}sos
Catalans 26, 43007 Tarragona, Spain.} \\{\small
ismael.gonzalez\@@urv.cat, juanalberto.rodriguez\@@urv.cat}
}
\begin{document}

\maketitle

\begin{abstract}
We show that the principal results of the article ``The
metric dimension of graph with pendant edges" [Journal of
Combinatorial Mathematics and Combinatorial Computing, 65 (2008)
139--145] do not hold. In this paper we correct the results and we
solve two open problems described in the above mentioned paper.
\end{abstract}

{\it Keywords:} Resolving sets, metric dimension, corona graph.

{\it AMS Subject Classification numbers:}   05C69; 05C70

\section{Introduction}
 Let $G=(V,E)$ be a simple graph of order $n=|V|$.
Let $u,v\in V$ be two different vertices of $G$, the distance $d(u,v)$ between vertices $u$ and $v$ is the length
of the shortest path between $u$ and $v$. Given a set of vertices $S=\{v_1,v_2,...,v_k\}$ of $G$,
the {\it metric representation} of a vertex $v\in V$ with respect to $S$ is the vector $r(v|S)=(d(v,v_1),d(v,v_2),...,d(v,v_k))$.
We say that $S$ is a {\it resolving set} for $G$ if for every pair of different vertices $u,v\in V$, $r(u|S)\ne r(v|S)$.
The {\it metric dimension} of $G$ is the minimum cardinality of any resolving set for $G$ and it is denoted by $dim(G)$.
The concept of metric dimension was introduced first independently by Harary and Melter \cite{harary} and Slater \cite{leaves-trees}, respectively.

Let $G$ and $H$ be two graphs of order $n$ and $m$, respectively. The corona
product $G\odot H$ is defined as the graph obtained from $G$ and $H$ by taking one copy of $G$ and $n$ copies of $H$ and
then joining by edges all the vertices from the $i^{th}$-copy of $H$ with the $i^{th}$-vertex of $G$.
%We will denote by $H_1,H_2,...,H_{n}$ the $n$ copies of $H$ in $G\odot H$.

Given the graphs $G$ and $H$ with set of vertices
$V_1=\{v_1,v_2,...,v_{n}\}$ and $V_2=\{u_1,u_2,...,u_{m}\}$,
respectively, the Cartesian product of $G$ and $H$ is the graph
$G\times H$ formed by the vertices $V=\{(v_i,u_j)\;:\;1\le i\le
n,\,1\le j\le m\}$ and two vertices $(v_i,u_j)$ and $(v_k,u_l)$ are
adjacent in $G\times H$ if and only if ($v_i=v_k$ and $u_j\sim u_l$)
or ($v_i\sim v_k$ and $u_j=u_l$). The metric dimension of Cartesian
product graph is studied in \cite{pelayo1}.

 The following results related to the metric
dimension of the graph $(P_n\times P_m)\odot K_1$ and $(K_n\times P_m)\odot K_1$ were published in \cite{baskoro}.

\begin{theorem}{\em \cite{baskoro}} For $n\ge 1$ and $1\le
m\le 2$, $dim((P_n\times P_m)\odot K_1)=2$.
\end{theorem}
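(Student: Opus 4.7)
The plan is to show $\dim\ge 2$ and then, in each of the cases $m=1$ and $m=2$, to exhibit a resolving set of size exactly~$2$. The lower bound is immediate: apart from the degenerate case $n=m=1$ (where the graph is $P_2$), $(P_n\times P_m)\odot K_1$ has pendants attached to internal vertices and so is not a path, whereas paths are the only connected graphs of metric dimension~$1$.

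For $m=1$, write $V(P_n)=\{v_1,\ldots,v_n\}$ and let $p_i$ be the pendant attached to $v_i$. I would take $S=\{p_1,p_n\}$. From the distance formulas $d(v_i,p_1)=i$, $d(v_i,p_n)=n-i+1$, $d(p_i,p_1)=i+1$ (for $i\ne 1$) and $d(p_i,p_n)=n-i+2$ (for $i\ne n$), a direct comparison of the $2n$ vectors $r(\cdot\mid S)$ shows that they are pairwise distinct, so $S$ resolves.

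For $m=2$, label the grid vertices $v_{i,j}$ with $1\le i\le n$, $j\in\{1,2\}$, and pendants $p_{i,j}$. The natural candidates are the ``corner pendant'' sets such as $S=\{p_{1,1},p_{n,2}\}$ or $S=\{p_{1,1},p_{n,1}\}$. Because every distance in the corona is the $\ell_1$-distance in the grid plus a correction in $\{0,1,2\}$, one gets closed-form expressions for $r(x\mid S)$ and then checks resolvability by splitting into cases on the row index of $x$ and on whether $x$ is a grid vertex or a pendant.

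The main obstacle, I expect, lies exactly in this $m=2$ case. In the $2\times n$ grid, any two grid vertices sharing the coordinate sum $i+j$ are equidistant from $\{p_{1,1},p_{n,2}\}$; likewise, each pair $v_{i,2},\,p_{i,1}$ is equidistant from $\{p_{1,1},p_{n,1}\}$ whenever $2\le i\le n-1$, since $d(v_{i,2},p_{1,1})=d(p_{i,1},p_{1,1})=i+1$ and $d(v_{i,2},p_{n,1})=d(p_{i,1},p_{n,1})=n-i+2$. Thus the symmetric candidates for $S$ collide already at $n=3$, and completing the upper bound for $m=2$ would require either a carefully asymmetric choice of landmarks (for example a pendant paired with a suitable interior grid vertex) or, given the title of the present paper, an outright correction to the statement.
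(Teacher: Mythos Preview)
Your instinct at the end is exactly right: the statement is false for $m=2$ and $n\ge 3$, and the present paper does not attempt to prove it. Theorem~1 is quoted verbatim from \cite{baskoro} precisely so that it can be corrected; the paper's own contribution (Theorem~3) shows that $\dim\bigl((P_n\times P_m)\odot K_1\bigr)=3$ for all $n\ge 3$ and $m\ge 2$. There is therefore no ``paper's own proof'' of this statement to compare against, and the obstruction you ran into in the $m=2$ case is not a defect of your argument but a manifestation of the fact that no two-element resolving set exists there.

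Your collision computations are correct and already contain the seed of the paper's lower-bound argument. The paper finishes the job by showing that \emph{every} pair $S'=\{a,b\}$ fails, not just the symmetric corner choices: either there are two distinct shortest $a$--$b$ paths (and two internal vertices on them collide), or the unique geodesic lies along a single row or column of the grid, in which case for $n\ge 3$, $m\ge 2$ some vertex on it has degree~$4$ and its two off-path neighbours have identical representations. So no ``carefully asymmetric'' choice of landmarks can push the upper bound down to~$2$; the correct value is~$3$, witnessed in the paper by the set $\{v_{11},v_{1m},v_{nm}\}$.
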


\begin{theorem}{\em \cite{baskoro}} For $n\ge 3$,
$$dim((K_n\times P_m)\odot K_1)=\left\{\begin{array}{ll}
                                        n-1, & m=1, \\
                                        n, & m=2.
                                      \end{array}
\right.$$
\end{theorem}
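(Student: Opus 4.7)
The plan is to handle $m=1$ and $m=2$ separately, in each case producing an explicit resolving set for the upper bound and a pendant/twin-style argument for the lower bound. I will write $v'$ for the pendant attached to $v$ in the corona, and repeatedly use the identities $d(v',w)=d(v,w)+1$ for $w\neq v'$ and $d(v',w')=d(v,w)+2$ for $v\neq w$.

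For $m=1$ the graph is the sun $K_n\odot K_1$ with $V(K_n)=\{v_1,\dots,v_n\}$. For the upper bound I take $S=\{v_1',\dots,v_{n-1}'\}$: each $v_i$ with $i<n$ has a unique $1$ in coordinate $i$, $v_n$ has all coordinates equal to $2$, each pendant $v_i'$ with $i<n$ has a unique $0$, and $v_n'$ has all coordinates equal to $3$; the $2n$ resulting vectors are pairwise distinct, so $dim\le n-1$. For the lower bound I observe that for $i\neq j$ the pendants $v_i'$ and $v_j'$ lie at distance $3$ from every vertex outside $\{v_i,v_j,v_i',v_j'\}$, so any resolving set must meet each of the $\binom{n}{2}$ quadruples; a standard covering argument then forces $|S|\ge n-1$.

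For $m=2$ write the layers $A=\{a_1,\dots,a_n\}$ and $B=\{b_1,\dots,b_n\}$ with $a_i\sim b_i$. For the upper bound I propose $S=\{a_1,\dots,a_{n-1},b_1\}$ (size $n$): grouping vertices by their first coordinate and inspecting the remaining entries shows all $4n$ representation vectors are pairwise distinct, with $b_1$ playing the role of breaking the $A$--$B$ symmetry that would otherwise collapse layer-$A$ and layer-$B$ representations. For the lower bound I would start from the pairs $(a_i',a_j')$ and $(b_i',b_j')$: the only vertices that distinguish them lie in $V_i\cup V_j$, where $V_k:=\{a_k,b_k,a_k',b_k'\}$, so a resolving set must meet at least $n-1$ of the sets $V_1,\dots,V_n$, giving $|S|\ge n-1$.

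The main obstacle is upgrading this bound from $n-1$ to $n$ for $m=2$. The column-covering condition alone is not enough, so I would try to exploit the cross-layer pairs $(a_i,b_j)$ with $i\neq j$, whose non-resolvers form exactly $\{a_j,b_i,a_j',b_i'\}$, together with the pairs $(a_i,a_j')$, and combine these constraints with the column-covering argument to rule out $|S|=n-1$. I expect this step to be very delicate: because one has a free choice of layer within each column, a well-chosen adversarial $S$ of size $n-1$ (for instance an $a$-heavy selection together with a single $b$-vertex in the right position) may still resolve the whole graph, and this is precisely the point at which the cited theorem could break down and where any correction is forced to concentrate its attention.
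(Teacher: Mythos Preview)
The theorem you are attempting to prove is precisely the result that this paper \emph{refutes}. The paper is a corrections note: it quotes this statement from \cite{baskoro} only in order to show that the case $m=2$ is wrong for $n\ge 4$. The paper's own Theorem~6 establishes that for every $m\ge 2$ one has $dim((K_n\times P_m)\odot K_1)=n-1$ when $n\ge 4$ and $=3$ when $n=3$. Hence no proof of the $m=2$ clause as stated can exist for $n\ge 4$, and your proposal is aimed at a false target.

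Your own final paragraph already pinpoints the failure: the upgrade of the lower bound from $n-1$ to $n$ is impossible, because an ``adversarial'' resolving set of size $n-1$ really does resolve the whole graph. In your notation for $m=2$, the paper exhibits
\[
S=\{b_1,\,a_3,\,a_4,\,\dots,\,a_n\},
\]
one vertex in the second layer of column $1$ together with first-layer vertices in columns $3,\dots,n$ (column $2$ is omitted entirely), and checks by a short case analysis that $S$ resolves $(K_n\times P_2)\odot K_1$. Consequently your column-covering bound $|S|\ge n-1$ is already sharp, and the additional cross-layer constraints you hoped to combine with it cannot force an extra vertex. The $m=1$ part of your sketch is fine and is not contested; for $n=3$ the value $3$ happens to coincide with the quoted claim, but for $n\ge 4$ the statement you are trying to prove is simply false.
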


In this paper we show that the above results
are not correct for the case $m=2$ and $n\ge 3$. We also solve the
general case $m\ge 2$.

\section{Results}

\begin{theorem}
If $n\ge 3$ and $m\ge 2$, then $dim((P_{n}\times P_{m})\odot
K_1)=3$.
\end{theorem}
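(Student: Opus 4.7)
The strategy is to establish $dim\le 3$ by exhibiting an explicit resolving set of size $3$, and $dim\ge 3$ by ruling out any resolving set of size $2$. Label the grid vertices $v_{i,j}$ for $1\le i\le n$, $1\le j\le m$, and write $u_{i,j}$ for the pendant attached to $v_{i,j}$. Since $u_{i,j}$ has degree one, every shortest path from $u_{i,j}$ passes through $v_{i,j}$; hence $d(u_{i,j},x)=d(v_{i,j},x)+1$ for every $x\ne u_{i,j}$, while $d(v_{i,j},v_{k,l})=|i-k|+|j-l|$.

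For the upper bound, I would take $W=\{v_{1,1},v_{n,1},v_{1,m}\}$, so that
$$r(v_{i,j}|W)=(i+j-2,\ n-i+j-1,\ i-j+m-1).$$
These three coordinates recover $(i,j)$, separating the grid vertices pairwise. The identity $r(u_{i,j}|W)=r(v_{i,j}|W)+(1,1,1)$ separates the pendants among themselves. Finally, equating a pendant with a grid vertex forces the second and third coordinates to yield $(k-i)-(l-j)=-1$ and $(k-i)-(l-j)=+1$ simultaneously, a contradiction; hence $W$ resolves.

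For the lower bound, assume $S=\{s_1,s_2\}$ resolves. Associate to each vertex its \emph{effective position} $(i,j)$ (so $v_{i,j}$ and $u_{i,j}$ share the effective position $(i,j)$), and let $(a_k,b_k)$ be the effective position of $s_k$. A case check on types shows $d(x,s_k)=|i-a_k|+|j-b_k|+c_k(x)$ whenever the effective position of $x$ differs from $(a_k,b_k)$, with $c_k(x)\in\{0,1,2\}$ depending only on whether $x$ and $s_k$ are grid vertices or pendants. In particular, for such ``generic'' $(i,j)$, $r(u_{i,j}|S)=r(v_{i,j}|S)+(1,1)$. A collision $r(u_{i,j}|S)=r(v_{i+1,j}|S)$ between a generic pendant and an adjacent generic grid vertex then occurs exactly when $|(i{+}1){-}a_k|-|i-a_k|=+1$ for both $k$, i.e.\ when $\max(a_1,a_2)\le i\le n-1$. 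Avoiding any such collision forces $\max(a_1,a_2)=n$, and the three symmetric variants force $\min(a_1,a_2)=1$, $\max(b_1,b_2)=m$, and $\min(b_1,b_2)=1$.

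Thus $\{(a_1,b_1),(a_2,b_2)\}$ must be one of the two diagonal pairs of corners of the $n\times m$ grid. In either diagonal, the pair $(|i-a_1|+|j-b_1|,\,|i-a_2|+|j-b_2|)$ depends only on $i+j$ or on $i-j$, so $v_{1,2}$ and $v_{2,1}$ (respectively $v_{1,1}$ and $v_{2,2}$) collide; in each case both vertices are generic whenever $n\ge 3$ and $m\ge 2$, contradicting the resolving property of $S$. The main obstacle is managing the four type combinations for the pair $(s_1,s_2)$ uniformly; the ``effective position'' bookkeeping handles this by reducing them to a single computation.
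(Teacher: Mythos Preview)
Your upper bound is correct and parallels the paper's: both pick three corners of the grid and verify the representations directly (the paper uses $\{v_{11},v_{1m},v_{nm}\}$, you use $\{v_{1,1},v_{n,1},v_{1,m}\}$).

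For the lower bound your route is genuinely different. The paper argues geometrically: given $S'=\{a,b\}$, either there are two shortest $a$--$b$ paths (and two internal vertices at equal distance from each endpoint collide), or the unique geodesic lies inside a single row or column of the grid, in which case any degree-four vertex on that path has two off-path neighbours with identical representation; a residual situation $m=2$ with $S'$ confined to one end of the grid is disposed of by an explicit collision. You instead encode $d(x,s_k)$ as $|i-a_k|+|j-b_k|+c_k(x)$ via effective positions and use pendant--versus--adjacent-grid collisions to force $(a_1,b_1),(a_2,b_2)$ to a diagonal pair of corners, after which such a pair sees only $i+j$ (respectively $i-j$). This algebraic reduction is clean and, as you note, handles all four type combinations for $(s_1,s_2)$ in a single stroke.

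There is, however, one small gap. The implication ``no collision $r(u_{i,j}|S)=r(v_{i+1,j}|S)$ forces $\max(a_1,a_2)=n$'' tacitly assumes that for some $i$ with $\max(a_1,a_2)\le i\le n-1$ a generic $j$ exists. When $m=2$ and both effective positions sit in the single row $i=n-1$, every $(n-1,j)$ is non-generic and the range yields no usable collision. For $n\ge 4$ the symmetric variant with $v_{i-1,j}$ still gives a contradiction (take $i=2$, which is then generic), but for $n=3$, $m=2$ with effective positions $\{(2,1),(2,2)\}$ none of your four pendant--grid collisions applies, so this configuration escapes your corner-forcing step. It is trivial to dispatch directly---by the reflection $i\mapsto 4-i$ the grid vertices $v_{1,1}$ and $v_{3,1}$ have identical distances to every vertex whose effective position lies in row $2$---but it does need to be stated.
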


\begin{proof}
Let $\{v_1,v_2,...,v_n\}$ and $\{u_1,u_2,...,u_m\}$ be the set of vertices of the graphs $P_{n}$ and $P_{m}$,
respectively.  The vertices of $P_{n}\times P_{m}$ will be denoted by  $v_{ij}=(v_i,u_j)$ and the pendant vertex of
$v_{ij}$ in  $(P_{n}\times P_{m})\odot K_1$ will be denoted by $u_{ij}$. 
We will show that $S=\{v_{11},v_{1m},v_{nm}\}$ is a
resolving set for $(P_{n}\times P_{m})\odot K_1$. The representations
of vertices of $(P_{n}\times P_{m})\odot K_1$ with respect
to $S$ are given by the following expressions,
\begin{align*}
r(v_{ij}|S)&=(d(v_{ij},v_{11}),d(v_{ij},v_{1m}),d(v_{ij},v_{nm}))\\
&=(i+j-2,m+i-j-1,m+n-i-j),
\end{align*}
\begin{align*}
r(u_{ij}|S)&=(d(u_{ij},v_{11}),d(u_{ij},v_{1m}),d(u_{ij},v_{nm}))\\
&=(i+j-1,m+i-j,m+n-i-j+1).
\end{align*}
Now, let us suppose there exist two different vertices $x,y$ of
$(P_{n}\times P_{m})\odot K_1$ such that $r(x|S)=r(y|S)$. If
$x=v_{ij}$ and $y=v_{kl}$, then $i\ne k$ or $j\ne l$ and we obtain
that
$$(i+j-2,m+i-j-1,m+n-i-j)=(k+l-2,m+k-l-1,m+n-k-l).$$
Which leads to $i=k$ and $j=l$, a contradiction. Analogously we
obtain a contradiction if $x=u_{ij}$ and $y=u_{kl}$. On the other
hand, if $x=v_{ij}$ and $y=u_{kl}$, then we have
$$(i+j-2,m+i-j-1,m+n-i-j)=(k+l-1,m+k-l,m+n-k-l+1),$$ which leads to
 $1=-1$, a contradiction. So,
for every different vertices $x,y$ of $(P_{n}\times P_{m})\odot
K_1$, we have $r(x|S)\ne r(y|S)$. Therefore, $dim((P_{n}\times
P_{m})\odot K_1)\le 3$.

On the other hand, since $(P_{n}\times
P_{m})\odot K_1$ is not a path, $dim((P_{n}\times
P_{m})\odot K_1)\ge 2$. Now let us suppose $S'=\{a,b\}$ is a resolving set for
$(P_n\times P_{m})\odot K_1$. If there exist two different paths of
length $d(a,b)$ between $a$ and $b$, then there exist two different
vertices $c,d$ of $(P_n\times P_{m})\odot K_1$ such that
$d(c,a)=d(d,a)$ and $d(c,b)=d(d,b)$, a contradiction. Let us suppose there is
only one path $Q$, of length $d(a,b)$, between $a$ and $b$. Thus,
all the vertices of $Q$, except possibly $a$ or $b$ which could be
pendant vertices, belong either to a copy of $P_n$ or to a copy of
$P_m$. We consider the following cases.

Case 1: If every vertex belonging to the path $Q$ has degree less or
equal than three, then $m=2$ and $S'\subset
\{u_{11},v_{11},u_{21},v_{21}\}$ or $S'\subset
\{u_{1n},v_{1n},u_{2n},v_{2n}\}$. Let us suppose $S'\subset
\{u_{11},v_{11},u_{21},v_{21}\}$. Now, for the vertices
$u_{1i},v_{1,i+1}$, $2\le i\le n-1$ we have that
\begin{align*}
d(u_{i1},a)&=d(u_{i1},v_{11})+d(v_{11},a)\\
&=d(v_{i+1,1},v_{11})+d(v_{11},a)\\
&=d(v_{i+1,1},a),
\end{align*}
\begin{align*}
d(u_{i1},b)&=d(u_{i1},v_{11})+d(v_{11},b)\\
&=d(v_{i+1,1},v_{11})+d(v_{11},b)\\
&=d(v_{i+1,1},b).
\end{align*}
Thus, $r(u_{i1}| S')=r(v_{i+1,1}|S')$, a contradiction.
On the contrary, if $S'\subset
\{u_{1n},v_{1n},u_{2n},v_{2n}\}$, then for the vertices
$u_{i1},v_{i-1,1}$, $2\le i\le n-1$ we have
\begin{align*}
d(u_{i1},a)&=d(u_{i1},v_{1n})+d(v_{1n},a)\\
&=d(v_{i-1,1},v_{1n})+d(v_{1n},a)\\
&=d(v_{i-1,1},a),
\end{align*}
\begin{align*}
d(u_{i1},b)&=d(u_{i1},v_{1n})+d(v_{1n},b)\\
&=d(v_{i-1,1},v_{1n})+d(v_{1n},b)\\
&=d(v_{i-1,1},b).
\end{align*}
Thus, $r(u_{i1}| S')=r(v_{i-1,1}|S')$, a contradiction.

Case 2: There exists a vertex $v$ of degree four belonging to the path $Q$. So, $v$ has two neighbors $c,d$ not
belonging to $Q$, such that $d(c,a)=1+d(v,a)=d(d,a)$ and
$d(c,b)=1+d(v,b)=d(d,b)$. Thus, $r(c| S')=r(d|S')$, a contradiction.
Hence,
$dim((P_{n}\times P_{m})\odot
K_1)\ge 3$. Therefore, the result follows.
\end{proof}

The
following lemmas are useful to obtain
the next result.

\begin{lemma}{\em \cite{pelayo1}}\label{remark-kn-pm}$\,$
If $n\ge 3$ then $dim(K_n\times P_m)=n-1$.
\end{lemma}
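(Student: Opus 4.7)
The plan is to establish both the upper bound and the lower bound by exploiting the fact that distances in a Cartesian product decompose as a sum, namely
\[
d\bigl((v_i,u_j),(v_k,u_l)\bigr) \;=\; d_{K_n}(v_i,v_k)+d_{P_m}(u_j,u_l),
\]
where $d_{K_n}(v_i,v_k)$ equals $0$ if $i=k$ and $1$ otherwise, and $d_{P_m}(u_j,u_l)=|j-l|$. I will denote the vertices of $K_n\times P_m$ by $(v_i,u_j)$ with $1\le i\le n$ and $1\le j\le m$.

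For the upper bound, I would take the candidate resolving set
$S=\{(v_1,u_1),(v_2,u_1),\ldots,(v_{n-1},u_1)\}$, so that $|S|=n-1$. A direct computation of the coordinates of $r((v_i,u_j)|S)$ using the distance formula above gives that if $i\le n-1$ the representation consists of $n-2$ entries equal to $j$ and a single entry equal to $j-1$ located in position $i$, whereas if $i=n$ the representation is the constant vector with all entries equal to $j$. From such a vector one can recover $j$ (as the maximum coordinate) and then $i$ (as the unique position where the value $j-1$ appears, or $i=n$ when no smaller value appears). Hence $S$ resolves every pair of distinct vertices.

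For the lower bound, the key observation is the following "projection" argument. Let $S$ be any resolving set, and for each pair of distinct indices $i\neq k$ in $\{1,\ldots,n\}$, consider the pair $(v_i,u_1),(v_k,u_1)$. By the distance formula, any element $(v_r,u_s)\in S$ satisfies $d((v_i,u_1),(v_r,u_s))=[i\neq r]+(s-1)$ and $d((v_k,u_1),(v_r,u_s))=[k\neq r]+(s-1)$; these two values agree unless $r\in\{i,k\}$. Therefore, in order to resolve the pair, the projection $T\subset V(K_n)$ of $S$ onto the first coordinate must contain $v_i$ or $v_k$. Since this holds for every pair of distinct vertices of $K_n$, the set $T$ is a vertex cover of $K_n$, hence $|T|\ge n-1$, and consequently $|S|\ge|T|\ge n-1$.

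The main obstacle I anticipate is simply making sure in the upper bound that two vertices whose $P_m$-coordinates differ by exactly one are not accidentally identified; this is handled by noting that in that case the two representations use the disjoint pairs of values $\{j-1,j\}$ and $\{j,j+1\}$ in asymmetric patterns, so a comparison of coordinate sums or of the multiset of values confirms they differ. Apart from this bookkeeping, everything reduces to the product distance identity and the vertex-cover lower bound for $K_n$.
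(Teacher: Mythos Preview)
The paper does not supply its own proof of this lemma: it is quoted verbatim from \cite{pelayo1} and used as a black box for the lower bound in the subsequent theorem. So there is nothing in the paper to compare your argument against.

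That said, your proof is correct and self-contained. The upper bound with $S=\{(v_1,u_1),\ldots,(v_{n-1},u_1)\}$ works exactly as you describe: since $n-1\ge 2$, the representation of $(v_i,u_j)$ for $i\le n-1$ has exactly one coordinate equal to $j-1$ and the rest equal to $j$, while the representation of $(v_n,u_j)$ is the constant vector $(j,\ldots,j)$; your recovery of $j$ as the maximum coordinate and of $i$ from the location (or absence) of the value $j-1$ is therefore unambiguous. Your final ``obstacle'' paragraph is in fact unnecessary---the recovery argument already distinguishes all pairs, including those whose $P_m$-coordinates differ by one---so you may simply delete it. The lower bound via the projection/vertex-cover argument is clean: for each pair $i\neq k$ the vertices $(v_i,u_1)$ and $(v_k,u_1)$ force the first-coordinate projection $T$ of any resolving set to meet $\{v_i,v_k\}$, hence $|S|\ge|T|\ge n-1$.
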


\begin{lemma}{\em \cite{buczkowski}}\label{one-pendant-edge}
If $G_1$ is a graph obtained by adding a pendant edge to a
nontrivial connected graph $G$, then
$$dim(G)\le dim(G_1)\le dim(G) + 1.$$
\end{lemma}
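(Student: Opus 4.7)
The plan is to prove the two inequalities separately, exploiting the fact that attaching a pendant edge leaves all distances between the original vertices of $G$ unchanged. Let $v$ denote the new pendant vertex and $u \in V(G)$ its unique neighbor.

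For the upper bound $dim(G_1) \le dim(G)+1$, I would take a minimum resolving set $S$ of $G$ and adjoin $v$ to it. First I would observe that for any $x,y \in V(G)$ one has $d_{G_1}(x,s) = d_G(x,s)$ for every $s \in S$, so the $S$-coordinates alone still distinguish any two vertices of $G$. The only new vertex $v$ is then separated from every $x \in V(G)$ by its own coordinate (zero for $v$, positive for $x$). Hence $S \cup \{v\}$ resolves $G_1$, giving $dim(G_1) \le |S|+1 = dim(G)+1$.

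For the lower bound $dim(G) \le dim(G_1)$, I would start from a minimum resolving set $S'$ of $G_1$ and construct from it a resolving set of $G$ of size at most $|S'|$. If $v \notin S'$, then $S' \subseteq V(G)$ resolves $G$ directly, because $G_1$-distances coincide with $G$-distances on $V(G)$. The interesting case is $v \in S'$: here I would replace $v$ by $u$, setting $S'' = (S' \setminus \{v\}) \cup \{u\}$. The key identity $d_{G_1}(x,v) = d_G(x,u)+1$, valid for every $x \in V(G)$, shows that the $v$-coordinate carries essentially the same information as a $u$-coordinate, offset by a fixed additive constant. Consequently any pair of $G$-vertices distinguished by the $v$-coordinate of $r_{G_1}(\cdot|S')$ is distinguished by the $u$-coordinate of $r_G(\cdot|S'')$, while all other coordinates transfer verbatim. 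Since $|S''| \le |S'|$, we obtain $dim(G) \le dim(G_1)$.

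The main subtlety is the subcase $v \in S'$ and $u \in S'$ already: then $|S''| = |S'| - 1$, and one must check that no distinguishing power is lost. This reduces to the observation that the $v$- and $u$-coordinates are redundant in this situation (they differ by a constant for every vertex of $G$), so the resolving property of $S'$ on $V(G)$ is preserved by $S''$. Apart from this bookkeeping, everything is driven by the single fact that a pendant edge is invisible to distances among the original vertices.
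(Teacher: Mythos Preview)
Your argument is correct and is the standard proof of this fact. Note, however, that the paper does not supply its own proof of this lemma: it is quoted from \cite{buczkowski} and used as a black box, so there is nothing in the present paper to compare your proof against. Your write-up could serve as a self-contained replacement for the citation; the only minor polishing I would suggest is to state once at the outset that $d_{G_1}(x,y)=d_G(x,y)$ for all $x,y\in V(G)$ (since every $x$--$y$ walk through $v$ would have to traverse the pendant edge twice), as this single observation underlies every step of both inequalities.
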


\begin{theorem}
If $m\ge 2$, then
$$dim((K_{n}\times P_{m})\odot K_1)=\left\{\begin{array}{cc}
    n-1, & \textrm{for $n\ge 4$,} \\
    3, & \textrm{for $n=3$.}
  \end{array}\right.
$$
\end{theorem}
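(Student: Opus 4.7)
The proof will split naturally into the cases $n\ge 4$ and $n=3$, because Lemma~\ref{one-pendant-edge} iterated with Lemma~\ref{remark-kn-pm} yields a tight lower bound in the first case but is off by one in the second.

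For $n\ge 4$, I would obtain the lower bound by viewing $(K_n\times P_m)\odot K_1$ as $K_n\times P_m$ with $nm$ pendant edges attached one at a time, so iterating Lemma~\ref{one-pendant-edge} together with Lemma~\ref{remark-kn-pm} gives $dim((K_n\times P_m)\odot K_1)\ge dim(K_n\times P_m)=n-1$. For the matching upper bound I would propose the explicit set
\[
S=\{v_{1,1},v_{2,1},\ldots,v_{n-2,1},v_{n-1,m}\}
\]
of size $n-1$, and use the closed-form distances $d(v_{i,1},v_{k,l})=[i\ne k]+l-1$ and $d(v_{n-1,m},v_{k,l})=[k\ne n-1]+m-l$ (with the pendant distances exceeding these by one). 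Verifying that $S$ resolves then reduces to a short case analysis on whether two candidates with equal codes are both of $v$-type, both of $u$-type, or of mixed type: the first $n-2$ coordinates pin down the row (the hypothesis $n\ge 4$ ensures enough indices $i\le n-2$ to rule out every row mismatch), and the last coordinate together with the column arithmetic then forces identical parameters, or an arithmetic impossibility such as $0=2$ in the mixed case.

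The case $n=3$ requires a separate lower bound, which I expect to be the main obstacle. The plan is to assume $S'=\{a,b\}$ resolves and extract two structural constraints. First, for $i\ne i'\in\{1,2,3\}$ and every column $j$, the vertices $v_{i,j}$ and $v_{i',j}$ are equidistant from every landmark that lies outside rows $i$ and $i'$; hence $\{a,b\}$ must occupy two distinct rows of the $K_3$ factor. Second, for each $i\in\{1,2,3\}$ the pair $v_{i,m}$, $u_{i,m-1}$ is separated only by landmarks in column $m$ or by the specific pendant $u_{i,m-1}$ itself; since three distinct such pendants cannot both fit in $\{a,b\}$, at least one of $a,b$ must lie in column $m$, and by the symmetric argument at the other end at least one must lie in column $1$. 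Combining these with the automorphisms that permute the rows of $K_3$ and reverse $P_m$ reduces the problem to $a\in\{v_{1,1},u_{1,1}\}$ and $b\in\{v_{2,m},u_{2,m}\}$, and a direct computation shows $r(v_{1,2}\mid S')=r(v_{2,1}\mid S')$ in all four sub-cases, contradicting the assumption.

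For the matching upper bound when $n=3$, I would verify that the analogous set $S=\{v_{1,1},v_{2,1},v_{3,m}\}$ is resolving by the same technique as for $n\ge 4$: the signature $([1\ne k],[2\ne k])$ of the first two coordinates takes a different value for each $k\in\{1,2,3\}$ and so uniquely identifies the row, after which the remaining equations force $l$ and the $v/u$ label, the cross-type case being again ruled out by a $0=2$ contradiction.
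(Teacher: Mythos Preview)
Your plan is correct and will go through. For $n\ge 4$ it matches the paper's strategy exactly: the lower bound via Lemmas~\ref{remark-kn-pm} and~\ref{one-pendant-edge}, and an explicit resolving set of size $n-1$ for the upper bound (the paper uses $\{v_{1m},v_{31},\ldots,v_{n1}\}$, which is your set up to a row permutation and a path reversal). For $n=3$ the upper bound is again via the same set $\{v_{11},v_{21},v_{3m}\}$, but your lower-bound argument is genuinely different from the paper's. The paper argues geometrically: between any two landmarks $a,b$ either there are two distinct geodesics, from which one extracts an unresolved pair directly, or there is a unique geodesic, which must then pass through a vertex of degree four whose two off-geodesic neighbours are equidistant from both $a$ and $b$. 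Your route instead constrains the positions of $a,b$ (distinct rows; one landmark in column $1$ and one in column $m$), reduces by symmetry to four explicit configurations, and checks that $v_{1,2}$ and $v_{2,1}$ are undistinguished in each. Both arguments are short; yours is more explicit and self-contained, while the paper's has the advantage of recycling verbatim the lower-bound idea already used for the $P_n\times P_m$ theorem. One minor wording point: for $n\ge 4$ your first $n-2$ coordinates do not literally pin down the row when both $k,k'\in\{n-1,n\}$, but they still force $l=l'$, and the last coordinate then separates rows $n-1$ and $n$, so the argument closes as intended.
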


\begin{proof}
Similarly to the above proof, let $v_{ij}=(v_i,u_j)$ be the set of
vertices of $K_{n}\times P_{m}$, where $v_i$, $1\le i\le n$ and
$u_j$, $1\le j\le m$ are vertices of the graphs $K_{n}$ and $P_{m}$,
respectively. Let us  denote by $u_{ij}$ the pendant vertex of
$v_{ij}$. Assume that $n=3$. We will 
show that $S=\{v_{11},v_{21},v_{3m}\}$ is a resolving set for $(K_3\times P_m)\odot K_1$. Let
us consider two different vertices $x,y$ of $(K_3\times P_{m})\odot
K_1$. We have the following cases.

Case 1: $x=v_{ij}$ and $y=v_{kl}$. If $j=l$, then $i\ne k$ and
either $i\ne 3$ or $k\ne 3$, say $i\ne 3$. So, for $v_{i1}\in S$ we
have $d(x,v_{i1})=j-1<j=d(y,v_{i1})$. On the contrary, say $j<l$. If
$i\ne 3$ or $k\ne 3$, for instance, $i\ne 3$, then for $v_{i1}\in S$
we have $d(x,v_{i1})=j-1<l-1\le d(y,v_{i1})$. Now, if $i=k=3$, then
$d(x,v_{3m})=m-j>m-l=d(y,v_{3m})$.

Case 2: $x=u_{ij}$ and $y=u_{kl}$. Is analogous to the above case.

Case 3: $x=v_{ij}$ and $y=u_{kl}$. If $j=l$ and $i=k=3$, then we
have $d(x,v_{3m})=m-j<m-j+1=d(y,v_{3m})$. Also, if  $j=l$ and ($i\ne
3$ or $k\ne 3$), say $i\ne 3$, then for $v_{i1}\in S$ we have
$d(x,v_{i1})=j-1<j\le d(y,v_{i1})$. On the other hand, if $j\ne l$,
we consider the following subcases.

Subcase 3.1: $i=k$ and $i\ne 3$. If $j=l+1$, then we have that
$d(x,v_{3m})=m-j+1=m-l<m-l+2=d(y,v_{3m})$. On the other hand, if
$j\ne l+1$, then for $v_{i1}\in S$ we have $d(x,v_{i1})=j-1\ne
l=d(y,v_{i1})$.

Subcase 3.2: $i=k=3$. If $j=l-1$, then there exists $v_{r1}\in S$,
$r\ne 3$ such that $d(x,v_{r1})=j=l-1<l+1=d(y,v_{r1})$. On the other
hand, if $j\ne l-1$, then we have that $d(x,v_{3m})=m-j\ne
m-l+1=d(y,v_{3m})$.

Subcase 3.3: $i\ne k$. Hence, we have either $i\ne 3$ or $k\ne 3$,
for instance $i\ne 3$. If $d(x,v_{i1})=j-1=d(y,v_{i1})$, then there
exist $v_{r1}\in S-\{v_{i1}\}$, $r\ne 3$, such that
$d(x,v_{r1})=j>j-1\ge d(y,v_{r1})$.

Therefore, $dim((K_3\times P_{m})\odot K_1)\le 3$.

On the other hand, let $S'=\{a,b\}$ be a resolving set for  $(K_3\times
P_{m})\odot K_1$. If
there exist two different paths of length $d(a,b)$ between $a$ and
$b$, then there exist two different vertices $c,d$ of $(K_3\times
P_{m})\odot K_1$ such that $d(c,a)=d(d,a)$ and $d(c,b)=d(d,b)$.  Hence, $r(c| S')=r(d|S')$, a contradiction. Moreover,
if there is only one path $Q$, of length $d(a,b)$,
between $a$ and $b$, then
there exists a vertex $v$ of degree four belonging to the path $Q$. So, $v$ has two neighbors $c,d$ not
belonging to $Q$, such that $d(c,a)=1+d(v,a)=d(d,a)$ and
$d(c,b)=1+d(v,b)=d(d,b)$. Thus, $r(c| S')=r(d|S')$, a contradiction.
Thus, $dim((K_3\times P_{m})\odot K_1)\ge 3$. Therefore, for $n=3$, the
result follows.

Now, let $n\ge 4$.  We will show that $S=\{v_{1m},v_{31},v_{41},...,v_{n1}\}$ is a
resolving set for $(K_{n}\times P_{m})\odot K_1$. Let us consider two
different vertices $x,y$ of $(K_{n}\times P_{m})\odot K_1$. We have
the following cases.

Case 1: $x=v_{ij}$ and $y=v_{kl}$. If $j=l$, then $i\ne k$. Let us
suppose $i=1$ and $k=2$. Hence for $v_{1,m}\in S$ we have
$d(x,v_{1m})=m-j<m-j+1=d(y,v_{1m})$. Now, if $i\notin \{1,2\}$ or
$k\notin \{1,2\}$, then we have $v_{i1}\in S$ or $v_{k1}\in S$, say
$v_{i1}\in S$. Thus, we have $d(x,v_{i1})=j-1<j=l=d(y,v_{i1})$.

On the other hand, if $j\ne l$, say $j<l$, then there exists
$v_{t1}\in S$, $t\in \{3,...,n\}$, $t\ne k$, such that
\begin{align*}
d(x,v_{t1})&=d(x,v_{i1})+d(v_{i1},v_{t1})\\
&\le j-1+d(v_{k1},v_{t1})\\
&< l-1+d(v_{k1},v_{t1})\\
&=d(y,v_{k1})+d(v_{k1},v_{t1})\\
&=d(y,v_{t1}).
\end{align*}

Case 2: $x=u_{ij}$ and $y=u_{kl}$. Since $d(u_{ij},v)=d(v_{ij},v)+1$
for every $v\in S$, we proceed analogously to the above case and we
obtain that $r(u_{ij}|S)\ne r(u_{kl}|S)$.

Case 3: $x=v_{ij}$ and $y=u_{kl}$.   If $j\le l$, then for every $v_{t1}\in S$ we have
\begin{align*}
d(x,v_{t1})&=d(x,v_{i1})+d(v_{i1},v_{t1})\\
&=j-1+d(v_{i1},v_{t1})\\
&<l-1+d(v_{i1},v_{t1})\\
&\le l+d(v_{k1},v_{t1})\\
&=d(y,v_{k1})+d(v_{k1},v_{t1})\\
&=d(y,v_{t1}).
\end{align*}
Now, if $j>l$, then we have
\begin{align*}
d(x,v_{1m})&=d(x,v_{im})+d(v_{im},v_{1m})\\
&=m-j+d(v_{im},v_{1m})\\
&<m-l+d(v_{im},v_{1m})\\
&\le m-l+1+d(v_{km},v_{1m})\\
&=d(y,v_{km})+d(v_{km},v_{1m})\\
&=d(y,v_{1m}).
\end{align*}
Therefore, for every two different vertices $x,y$ of $(K_n\times
P_{m})\odot K_1$ we have, $r(x|S)\ne r(y|S)$ and, as a consequence, $S$ is a resolving set for $(K_n\times P_{m})\odot K_1$ of cardinality $n-1$.

On the other hand, by Lemma \ref{remark-kn-pm} and Lemma
\ref{one-pendant-edge} we have $dim((K_{n}\times P_{m})\odot K_1)\ge
n-1$. Hence, for $n\ge 4$, the result follows.
\end{proof}

\section*{Acknowledgements}
This work was partially supported   by the
Spanish Ministry of Education through projects TSI2007-65406-C03-01
 ``E-AEGIS" and Consolider Ingenio 2010 CSD2007-00004 
``ARES''.

\end{document}